\newtheorem{theorem}{Theorem}
\newtheorem{lemma}[theorem]{Lemma}
\newtheorem{proposition}[theorem]{Proposition}
\newtheorem{conjecture}[theorem]{Conjecture}
\newcommand\pcl{\mathcal{P}}
\DeclareMathOperator{\pal}{Pal}
\DeclareMathOperator{\bw}{bw}
\DeclareMathOperator{\fact}{Fact}
\DeclareMathOperator{\alp}{Alph}
\title{Perfectly Clustering Words and Iterated Palindromes \\ over a Ternary Alphabet}
\author{Mélodie Lapointe \qquad\qquad Nathan Plourde-Hébert
\institute{Université de Monction\\
Moncton, Canada}
\email{melodie.lapointe@umoncton.ca \qquad enp0579@umoncton.ca}
}
\begin{document}
\maketitle

\begin{abstract}
    Recently, a new characterization of Lyndon words that are also perfectly clustering was proposed by Lapointe and Reutenauer (2024). A word over a ternary alphabet $\{a,b,c\}$ is called perfectly clustering Lyndon if and only if it is the product of two palindromes and it can be written as $a\pi_1  b  \pi_2 c$ where $\pi_1$ and $\pi_2$ are palindromes. We study the properties of palindromes appearing as factors $\pi_1$ and $\pi_2$ and their links with iterated palindromes over a ternary alphabet.
\end{abstract}

\section{Introduction}

The Burrows-Wheeler transform of a word $w$, denoted $\bw(w)$,  is obtained from $w$ by first listing the conjugates of $w$ in lexicographic order, then concatenating the final letters of the conjugates in this order.
For example, the Burrows-Wheeler transform of $apartment$ is $tpmteaanr$. 
It was introduced in~\cite{BW1994}  as a tool in data compression. After applying the Burrows-Wheeler transform to a word, the occurrence of a given letter tend to occur in clusters. This clustering effect is optimal when all occurrences of each letter are group together. Words showing that optimal properties are thus called $\pi$-clustering. The permutation $\pi$ represent the order in which the cluster of similar letters appear. 
The word aluminium, for example, is $451623$-clustering since $bw(aluminium) = mmnauuiil$.
A word $w$ is \emph{perfectly clustering} if its Burrows-Wheeler transform is a decreasing word, i.e., the clusters of letters appear from highest to lowest with respect to the alphabet order. 
This terminology was introduced  by Ferenczi and Zamboni~\cite{FZ2013}.

Perfectly clustering words were proposed in~\cite{PS2008, FZ2013} as a generalization of Christoffel words.
Recently, Reutenauer and the first author~\cite{LR2024} showed that a primitive word $w$ is a perfectly clustering Lyndon word if and only if it is a product of two palindromes and has a palindromic special factorization, i.e., $w = a_1\pi_1 a_2 \pi_2 \cdots \pi_{k-1}a_{k}$, where the letters in $w$ are in $\{a_1 < a_2 < \dots < a_k\}$ and $\pi_1, \pi_2,\dots, \pi_k$ are palindromes. This is also a generalization of characterization of Christoffel word due to de Luca and Mignosi~\cite{dLM1994}; a binary word $amb$ is a Christoffel word if and only if the word $amb$ is a product of two palindromes and $m$ is also a palindrome called a central word. Hence, the palindromic special factorization of a Christoffel word in $\{a,b\}^*$ is simply $amb$ where $m$ is a palindrome. Central words have many properties (see~\cite{B2007,R2019} for more information). We recall only one of them: a central word is the image of a mapping called iterated palindromization~\cite{dL1997}. 

In this extended abstract, we discuss results about the palindromes in the palindromic special factorization of perfectly clustering Lyndon words over ternary alphabet. In Section 2, we recall some definitions about these words. In Section 3, 
we explore some relationships between the palindromes appearing in the special factorization.
In Section 4, we describe the iterated palindromes that are factors of this factorization of perfectly clustering Lyndon words.

\section{Definition}

\subsection{Words}
For the rest of the paper, 
let $A = \{a,b,c\}$ be a totally ordered alphabet, where $a < b < c$.
Let $w = w_1w_2\cdots w_n$ be a word in the free monoid generated by $A$.
The length of $w = w_1\cdots w_n$ (with $w_i\in A$), denoted by $|w|$, is $n$.
The number of occurrences of a letter $x$ in $w$ is denoted by $|w|_x$.
The {\em Parikh vector} of $w$ is the integer vector $(|w|_{a},|w|_b,|w|_{c})$.
The function $\alp$ is defined by $\alp(w) = \{x \in A \mid |w|_x \geq 1\}$. 

A word $w$ is called {\em primitive} if it is not the power of another word; that is, for any word $z$ such that $w = z^n$, one has $n = 1$.
The \emph{conjugates} of a word $w$ are the words $w_i \cdots w_n w_1 \cdots w_{i-1}$. In other words, two 
words $u,v\in A^*$ are {\it conjugate} if for some words $x,y\in A^*$, one has $u=xy$ and $v=yx$. The {\em conjugation class} of a word is the set of its conjugates.
If a word $w$ of length $n$ is primitive, then it has  exactly $n$ distinct conjugates.
A word $w$ is called a {\em Lyndon word} if it is  primitive, and 
it is the minimal word in lexicographic order among its conjugates.

The {\it reversal} of $w = w_1\cdots w_n$, denoted by $R(w)$, is the word $R(w) = w_n \cdots w_1$. 
A {\it palindrome} is a word $w$ such that $w = R(w)$.
A word $u$ is a {\it factor} of $w$ if there exists two words $x,y \in A^*$ such that $w=xuy$.
The {set of factors} of $w$ is denoted by $\fact(w)$ and $\fact_k(w)$ denotes the set of factors of length $k$ of $w$. 

\subsection{Perfectly Clustering Lyndon Words}\label{pcw}
The {\it special factorization} of a word $w$ over $A$ is a factorization of $w$ of the form
$w= a\pi_1 b  \pi_2 c$, where $\pi_1, \pi_2 \in A^*$. If $\pi_1$ and $\pi_2$ are both palindromes, then the special factorization is called {\it palindromic}.
A {\it perfectly clustering Lyndon words} on $A^*$ is a word $w$ such that $w$ is a product of two palindromes and has a palindromic special factorization. 
For example, the word $acbcbbcbc$ is a perfectly clustering Lyndon word since it is the product of the palindromes $a$ and $cbcbbcbc$ and it has the palindromic special factorization $a\cdot cbc \cdot b \cdot bcb \cdot c$.
Moreover, the palindromic special factorization of a perfectly clustering Lyndon word is unique~\cite{LR2024}.

This is not the original definition of perfectly clustering words, but of a characterization of perfectly clustering Lyndon word given in~\cite{LR2024}. Usually, a word $w$ is called perfectly clustering if its Burrows-Wheeler transform is $c^{|w|_c}b^{|w|_b}a^{|w|_a}$ (see~\cite{PS2008} for a complete definition).
If a primitive word is perfectly clustering, then all its conjugates are. Consequently, there is no loss of generality in studying only perfectly clustering Lyndon words.
The set of perfectly clustering Lyndon words is denoted by $\pcl$.

It was proved by Mantaci, Restivo and Sciortino~\cite[Theorem 9]{MRS2003} that perfectly clustering words on a binary alphabet are  Christoffel words 
and their conjugates.
Let recall the following lemma describing the possible sets of factors of length 2 of a perfectly clustering word.
\begin{lemma}[\cite{PS2008}]~\label{2-factors}
    Let $w$ be a perfectly clustering word in $\{a,b,c\}^*$. Then $\fact_2(w)$ is a subset of one of the sets below:
    \begin{multicols}{3}
    \begin{itemize}
        \item $\{ab,ac,ba,bb,ca\}$
        \item $\{aa,ab,ac,ba,ca\}$
        \item $\{ac,bb,bc,ca,cb\}$
        \item $\{ac,bc,ca,cb,cc\}$
    \end{itemize}
    \end{multicols}
\end{lemma}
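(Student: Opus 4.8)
The plan is to read $\fact_2(w)$ directly off the Burrows–Wheeler array of $w$. Set $n_x=|w|_x$ and $n=|w|$, and assume $w$ is primitive (perfectly clustering words are), so that sorting its $n$ conjugates lexicographically yields an $n\times n$ array with distinct rows. Its first column is the nondecreasing rearrangement of the letters of $w$, namely $F=a^{n_a}b^{n_b}c^{n_c}$, and its last column is, by the definition of perfect clustering, $L=\bw(w)=c^{n_c}b^{n_b}a^{n_a}$. The first step I would record is the standard observation that in each row the final letter $L_i$ is cyclically followed by the first letter $F_i$, so that every $L_iF_i$ is a length-$2$ factor of the cyclic word $w$ and, conversely, every cyclic $2$-factor arises this way; in particular $\fact_2(w)\subseteq\{\,L_iF_i : 1\le i\le n\,\}$, and it suffices to list the pairs $L_iF_i$.

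The second step is to exploit monotonicity. As $i$ increases from $1$ to $n$, the letter $F_i$ is nondecreasing, jumping from $a$ to $b$ at index $n_a$ and from $b$ to $c$ at index $n_a+n_b$, while $L_i$ is nonincreasing, jumping from $c$ to $b$ at index $n_c$ and from $b$ to $a$ at index $n_c+n_b$. Hence the sequence of pairs $(L_i,F_i)$ is completely determined by the relative order of the four breakpoints $n_a,\ n_a+n_b,\ n_c,\ n_c+n_b$ inside $[1,n]$. The decisive simplification is that the two $F$-breakpoints and the two $L$-breakpoints are separated by one and the same gap, since $(n_a+n_b)-n_a=(n_c+n_b)-n_c=n_b$; this rigidity leaves only a handful of possible interleavings.

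Finally I would run the case analysis, arranged by the sign of $n_a-n_c$. When $n_a>n_c$ the breakpoints interleave in two ways: $n_a<n_c+n_b$ produces the first set, and $n_a\ge n_c+n_b$ produces the second. The letter-reversing symmetry $a\leftrightarrow c$ composed with word reversal sends perfectly clustering words to perfectly clustering words and exchanges the roles of $n_a$ and $n_c$, so the case $n_a<n_c$ follows formally and yields the third set (when $n_c<n_a+n_b$) and the fourth set (when $n_c\ge n_a+n_b$). The balanced case $n_a=n_c$ collapses the $F$- and $L$-breakpoints and gives only $\{ac,bb,ca\}$, already contained in the first set. In each branch, scanning the consecutive index-intervals carved out by the sorted breakpoints reads off exactly the five listed pairs. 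I expect the genuine obstacle to be the bookkeeping in the degenerate situations — a vanishing $n_b$, coinciding breakpoints, or $w$ omitting a letter — where some intervals disappear and one must check that no pair outside the claimed set sneaks in; here the equal-gap identity rules out any stray interleaving, and the $a\leftrightarrow c$ symmetry halves the verification.
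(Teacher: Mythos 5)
Your proposal is correct, but there is nothing in the paper to compare it against: the paper does not prove \cref{2-factors} at all, importing it as a black box from~\cite{PS2008}. What you have written is a complete, self-contained reconstruction of the standard argument (essentially the one in the cited reference): read the sorted-conjugates array, note that its first column is $a^{n_a}b^{n_b}c^{n_c}$, that its last column is $\bw(w)=c^{n_c}b^{n_b}a^{n_a}$ by hypothesis, that each last letter $L_i$ is cyclically followed by the first letter $F_i$ of the same row, and then let the monotone/antimonotone breakpoint structure do the work. I checked the case analysis: for $n_a>n_c$ the two interleavings give exactly $\{ca,ba,bb,ab,ac\}$ and $\{ca,ba,aa,ab,ac\}$, the symmetric case gives the third and fourth sets, $n_a=n_c$ gives $\{ac,bb,ca\}$, and the degenerate situations ($n_b=0$, a missing letter, coinciding breakpoints) only erase intervals, so they yield subsets of the listed sets; your equal-gap observation $(n_a+n_b)-n_a=(n_c+n_b)-n_c$ is indeed what excludes any other interleaving. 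Two fine points are worth making explicit. First, your parenthetical ``perfectly clustering words are primitive'' is justified under this paper's definition because $\bw$ concatenates the last letters of the \emph{distinct} conjugates, so the condition $\bw(w)=c^{|w|_c}b^{|w|_b}a^{|w|_a}$ forces $|w|$ distinct conjugates; alternatively, the whole argument goes through verbatim with rotations counted with multiplicity, so primitivity is not actually needed. Second, your symmetry step uses the fact that $\omega=R\circ\theta$ preserves perfect clustering, which this paper itself only asserts with a citation to~\cite{L2020}; if you want the proof fully self-contained you can either verify that fact (conjugates of $R(w)$ are reversals of conjugates of $w$, and $\theta$ reverses lexicographic order) or simply repeat the two-line interval bookkeeping in the case $n_a<n_c$, which is symmetric and costs nothing.
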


\subsection{Iterated Palindromes}
The \emph{(right) palindromic closure} of $w$, denoted by $w^{(+)}$, is the shortest unique palindrome having $w$ as a prefix, i.e., if $w = ps$ where $s$ is the longest palindromic suffix of $w$, then $w^{(+)} = ps R(p)$. We define the mapping $\pal(w)$ from a free monoid to itself, called iterated palindromization, as follows: $\pal(\varepsilon) = \varepsilon$ and for each letter $x$, $\pal(ux) = (\pal(u)x)^{(+)}$. A word $w$ such that $w = \pal(u)$ is called an \emph{iterated palindrome} and $u$ is called the \emph{directive word} of $w$.
For example, the word $ababaababa$ is an iterated palindrome and its directive word is $abba$.
A word $amb$ is a Christoffel word if and only if $m$ is an iterated palindrome on a binary alphabet \cite{dL1997}. Therefore, iterated palindromes on a binary alphabet are the only palindromes in the palindromic special factorization of Christoffel words. 

\section{Sets of palindromes}

From the set $\pcl$, let define two sets of words $P_1$ and $P_2$ as follows.
\begin{align*}
    P_1 &= \{\pi_1 \mid a\pi_1b\pi_2 c \in \pcl\} \\
    P_2 &= \{\pi_2 \mid a\pi_1b\pi_2 c \in \pcl\}. 
\end{align*}
By definition, we know that all the words in $P_1$ and $P_2$ are palindromes. However, these sets are not equal, as shown in the next proposition.

\begin{proposition}
    $P_1 \neq P_2$
\end{proposition}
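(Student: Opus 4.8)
The plan is to prove the inequality by exhibiting a single palindrome that lies in $P_2$ but not in $P_1$; a convenient witness is the palindrome $bcb$. Its membership in $P_2$ is immediate from the running example: the word $acbcbbcbc$ is perfectly clustering Lyndon with palindromic special factorization $a\cdot cbc\cdot b\cdot bcb\cdot c$, so $\pi_2 = bcb$ and hence $bcb \in P_2$. The whole argument therefore reduces to showing $bcb \notin P_1$.

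To establish $bcb \notin P_1$, I would argue by contradiction using \cref{2-factors}. Suppose $bcb \in P_1$, so that $w = a\cdot bcb\cdot b\cdot \pi_2\cdot c$ is perfectly clustering for some palindrome $\pi_2$. Regardless of $\pi_2$, the word $w$ has prefix $abcbb$, and reading its length-two factors shows that $ab$, $bc$ and $cb$ all belong to $\fact_2(w)$. I would then inspect the four candidate sets of \cref{2-factors} and observe that none of them contains all three of $ab$, $bc$, $cb$: the sets $\{ab,ac,ba,bb,ca\}$ and $\{aa,ab,ac,ba,ca\}$ omit both $bc$ and $cb$, while $\{ac,bb,bc,ca,cb\}$ and $\{ac,bc,ca,cb,cc\}$ omit $ab$. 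Since $\fact_2(w)$ must be contained in one of these four sets, this is a contradiction, so no such $w$ exists and $bcb \notin P_1$.

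The conceptual reason behind the asymmetry, which I would highlight to make the example transparent, is that the two slots are flanked differently: $\pi_1$ sits between $a$ and $b$, whereas $\pi_2$ sits between $b$ and $c$. Placing $bcb$ in the first slot produces the junction $a\cdot bcb$, forcing the factor $ab$, which is incompatible with the internal factors $bc$ and $cb$ of $bcb$; placing the same palindrome in the second slot produces $b\cdot bcb\cdot c = bbcbc$, whose factors $bb$, $bc$, $cb$ all fit comfortably inside the set $\{ac,bb,bc,ca,cb\}$. I do not expect any serious obstacle here: the only genuine step is choosing a palindrome whose own length-two factors ($bc$ and $cb$) clash with the letter $a$ that necessarily precedes $\pi_1$, and the running example already supplies such a palindrome, so the verification via \cref{2-factors} is routine.
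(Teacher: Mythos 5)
Your proof is correct and follows essentially the same route as the paper: the witness $bcb$, its membership in $P_2$ via the factorization $a\cdot cbc\cdot b\cdot bcb\cdot c$, and non-membership in $P_1$ by checking that the length-two factors forced by the prefix $abcbb$ fit in none of the four sets of \cref{2-factors}. The only cosmetic difference is that the paper cites the factor set $\{ab,bc,cb,bb\}$ while you use the three factors $ab,bc,cb$, which suffice just as well.
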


\begin{proof}
    One can check that $a\cdot cbc \cdot b \cdot bcb \cdot c$ is a perfectly clustering Lyndon word with the given palindromic special factorization. Hence, the palindrome $bcb \in P_2$. 
    
    It is sufficient to show that $bcb \not\in P_1$, i.e., that for any word $u \in A^*$, the word
    $a\cdot bcb \cdot b \cdot u \cdot c$ is not a perfectly clustering Lyndon word. The set of factors $\{ab,bc,cb,bb\} \subseteq \fact_2(abcbbuc)$ but $\{ab,bc,cb,bb\}$ is not a subset of the one of the set in Lemma\ref{2-factors}. Thus, the word $a\cdot bcb \cdot b\cdot u \cdot c$ is not a perfectly clustering Lyndon word and $bcb \not\in P_1$. This means that $P_1 \neq P_2$. 
\end{proof}

Some palindromes are in both sets. For example, the words $a\cdot cac \cdot b \cdot c$ and $a\cdot c\cdot b\cdot cac \cdot c$ are both perfectly clustering Lyndon words. Thus, $cac \in P_1 \cap P_2$. The intersection between $P_1$ and $P_2$ is discussed in Section~\ref{test}. 

There is a relationship between $P_1$ and $P_2$. Let $\theta$ be the morphism exchanging the letter $a$ and $c$ defined by
    $\theta(a) = c, \quad \theta(b) = b, \quad \theta(c) = a.$
The antimorphism, $\omega$ defined as  $ \omega = R \circ \theta$, send perfectly clustering Lyndon word to perfectly clustering Lyndon word~\cite{L2020}.

\begin{lemma}~\label{theta}
    $P_1 = \theta(P_2)$
\end{lemma}

\begin{proof}
    Let $p \in P_1$ be an arbitrary palindrome. There exist a perfectly clustering Lyndon word $w$ and a palindrome $u \in A^*$ such that $w = apbuc$. Then
    \begin{align*}
\omega(w) = (R\circ \theta)(apbuc) = R(c\theta(p)b\theta(u)a) = a (R\circ \theta)(u)b (R \circ \theta)(p)c.
    \end{align*}
    The word $\omega(w)$ is a perfectly clustering Lyndon word with the given palindromic special factorization. 
    Since $p$ is a palindrome, $(R\circ \theta)(p) = \theta(p)$ and $\theta(p) \in P_2$. Similarly, we show that $\theta(P_2) \subseteq P_1$. Therefore $P_1 = \theta(P_2)$.
\end{proof}

\section{Iterated palindromes in the previous sets}\label{test}

Some iterated palindromes appears in $P_1$ and $P_2$, but those sets also contain words which are not iterated palindromes. For example, $bacab$ is a palindrome in $P_1$ which is not an iterated palindrome since the word $a\cdot bacab\cdot b\cdot a \cdot c$ is a perfectly clustering Lyndon word.

\begin{proposition}~\label{P1}
    Let $u\in A^*$ be a word. The iterated palindrome $\pal(u)\in P_1$ if and only if $u \in \{a,c\}^*\cdot \{a,b\}^*$.
\end{proposition}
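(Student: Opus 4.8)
The plan is to prove both implications by combining the recursive structure of the palindromization map $\pal$ with the two‑factor restriction of \cref{2-factors}, and to use the antimorphism $\omega$ of \cref{theta} to halve the number of cases. The single most useful observation is that the first letter of the directive word $u$ equals the first, hence also the last, letter of $\pal(u)$, because $\pal(u_1)=u_1$ is a prefix of every longer $\pal(u_1\cdots u_k)$. Thus, writing $w=a\pal(u)b\pi_2 c$, the letter $u_1$ already forces $\fact_2(a\pal(u)b)$ into one of the four lists: $u_1=a$ produces $aa$ and $ab$ and hence forces the list $\{aa,ab,ac,ba,ca\}$ (the only one containing $aa$); $u_1=c$ produces $ac$ and $cb$ and forces one of the two lists containing $\{ac,cb\}$; and $u_1=b$ produces $ab$ together with $bb$.

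For necessity I would argue the contrapositive: assume the $\{b,c\}$‑projection of $u$ (erase every $a$) contains the factor $bc$, i.e.\ some $b$ precedes some $c$, and deduce $\pal(u)\notin P_1$. The clean case is $u_1=b$: letting $u'$ be the prefix ending just before the first $c$, the palindrome $\pal(u')$ begins and ends with $b$, so the new‑letter rule $\pal(u'c)=\pal(u')\,c\,\pal(u')$ places this $c$ directly between two $b$'s, giving $\{bc,cb\}\subseteq\fact_2(\pal(u))$; combined with the forced $ab$ and $bb$, no list of \cref{2-factors} is met, so $\pal(u)\notin P_1$.

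The hard case is $u_1\in\{a,c\}$. Here the first $c$ following a $b$ is inserted between two $a$'s (when $u_1=a$), respectively is not a new letter (when $u_1=c$), so all four lists may still be respected locally and \cref{2-factors} no longer suffices: for example $\pal(abc)=abacaba$ satisfies $\fact_2(a\cdot abacaba\cdot b)=\{aa,ab,ba,ac,ca\}\subseteq\{aa,ab,ac,ba,ca\}$, yet $abacaba\notin P_1$. This is the main obstacle, and it must be resolved through the global clustering condition rather than a local factor count. When $u_1=a$ every $b$ and $c$ of $w$ is isolated between blocks of $a$'s, so $w$ is determined by its $\{b,c\}$‑skeleton together with the gap pattern; the point I would establish is that a $b$‑before‑$c$ in $u$ forces, at the insertion of the first $c$, the skeleton pattern $b^{t}cb^{t}$, and that this pattern (with its gaps) produces two conjugates of $w$ whose lexicographic order is incompatible with the required order of their last letters, i.e.\ with $\bw(w)$ being decreasing. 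Identifying this offending pair of conjugates explicitly, and reducing the $u_1=c$ variant to the $u_1=a$ one by the $\omega$‑symmetry of \cref{theta}, is the technical core.

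For sufficiency I would induct on $|u|$, using that every prefix of a word in $\{a,c\}^*\{a,b\}^*$ again lies in $\{a,c\}^*\{a,b\}^*$ and that appending a letter is constrained (a final $c$ is allowed only while $u$ still has no $b$, whereas $a$ and $b$ are always allowed). The base cases are direct, e.g.\ $\pal(\varepsilon)=\varepsilon$ is realized by $abac=aba\cdot c$, and the single letters likewise. For the step I would exhibit the companion word $w=a\pal(u)b\pi_2 c$ explicitly and certify it by a factorization of $w$ into two palindromes — the split occurring at the front, $w=a\cdot(\text{palindrome})$, when $u_1=c$, and near the back, $w=(\text{palindrome})\cdot c$, when $u_1=a$, with $\pi_2$ read off from the appropriate palindromic closure — and then checking $\fact_2(w)$ against the list selected by $u_1$ and that $w$ is Lyndon. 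The delicate point is proving that the suffix produced by the closure is itself a palindrome, so that the special factorization is genuinely palindromic; this is exactly where the hypothesis that all $c$'s of $u$ precede all $b$'s is used, since it prevents the newly created $c$‑blocks and $b$‑blocks from interleaving and breaking the palindrome.
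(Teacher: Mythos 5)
Your proposal contains genuine gaps in both directions, and they occur exactly at the points you yourself flag as ``the technical core'' and ``the delicate point.'' For necessity, your case $u_1=b$ is correct: the first $c$ is inserted between two $b$'s, so $\{ab,bb,bc,cb\}\subseteq\fact_2(a\pal(u)b\pi_2c)$, which is contained in none of the four lists of \cref{2-factors}. (The paper's own lemma handles this case via the separating-letter property of~\cite{DJP2001}: the first letter of the directive word occurs in every length-$2$ factor of $\pal(bu)$, so once $\alp(u)=A$ the palindrome $\pal(bu)$ contains $\{ab,ba,bc,cb\}$ and cannot even be a factor of a perfectly clustering word.) But for $u_1\in\{a,c\}$ with a later $b$ preceding a $c$ --- the case your own example $\pal(abc)=abacaba$ shows cannot be settled by \cref{2-factors} --- you offer only a plan: find ``an offending pair of conjugates'' incompatible with a decreasing Burrows--Wheeler transform. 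You never exhibit that pair nor prove the incompatibility, and this case is the substance of the proposition. Note also that the $\omega$-symmetry of \cref{theta} exchanges $\pi_1$ and $\pi_2$, hence maps membership in $P_1$ to membership in $P_2$; it cannot reduce the ``$u_1=c$ for $P_1$'' case to the ``$u_1=a$ for $P_1$'' case as you claim, unless you set up a joint induction proving \cref{P1} and \cref{P2} simultaneously.

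For sufficiency, your induction never constructs the companion palindrome $\pi_2$ nor proves that the suffix produced by the closure is a palindrome; you assert this is where the hypothesis $u\in\{a,c\}^*\cdot\{a,b\}^*$ enters, but give no mechanism, so the inductive step does not close. This is precisely what the paper's machinery supplies: the induction is run through the free-group automorphisms $\lambda_a,\rho_b,\rho_c$ of~\cite{L2020} applied to the seed $abac$. These act on the whole factorization $a\pi_1b\pi_2c$ at once; a computation in the style of Justin's formula $\pal(xv)=\psi_x(\pal(v))x$ (with $\psi_x$ the elementary episturmian morphism) shows that $f_x$ prepends the letter $x$ to the directive word of $\pi_1$ while carrying $\pi_2$ to another palindrome, and the results of~\cite{L2020} control which words obtained this way are perfectly clustering, respectively guarantee that every perfectly clustering word of length at least $3$ arises from a shorter one by such an automorphism. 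Some tool of this kind --- morphisms acting globally on the special factorization, rather than letter-by-letter closure analysis and local factor counts --- is what is missing from both halves of your argument; as written, your proposal is an accurate map of where the difficulties lie rather than a proof.
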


The proof of~\cref{P1} uses induction and the construction of perfectly clustering Lyndon word proposed in~\cite{L2020}.
We defined four automorphisms of the free group $F(A)$ by $\lambda_a (a) = a$, $\lambda_a(b) = ab$ and $\lambda_a(c)= ac$; $\lambda_b (a) = ab^{-1}$, $\lambda_b(b) = b$ and $\lambda_b(c)= bc$; $\rho_b (a) = ab$, $\rho_b(b) = b$ and $\rho_b(c)= b^{-1}c$ and $\rho_c (a) = ac$, $\rho_c(b) = bc$ and $\rho_c(c)= c$. It was proved in~\cite{L2020} that for each perfectly clustering word $w \in A^*$ of length at least 3, there exists a shorter perfectly clustering word $u \in A^*$ and an automorphism $f \in \{\lambda_a,\lambda_b,\rho_b,\rho_c\}$ such that $w = f(u)$. Since the word $abac$ is a perfectly clustering Lyndon word, we only need to show that $f(a\varepsilon bac) = a \pal(u)b q c$ where $f = f_{x_1}\circ f_{x_2} \circ \dots \circ f_{x_n}$, $f_{x_i} \in \{\lambda_a,\rho_b,\rho_c\}$, $u = x_1 x_2 \cdots x_n$ and $q \in P_2$.
Moreover, the following lemma means that no other iterated palindrome can be in $P_1$.

\begin{lemma}
    Let $u \in A^*$ be a word such that $\alp(u) = A$. The iterated palindrome $\pal(bu)$ is not in $P_1$, nor in $P_2$.
\end{lemma}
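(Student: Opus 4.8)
The plan is to reduce both non-membership claims to \cref{P1} by exploiting the letter-exchange $\theta$. The first ingredient I would record is that $\theta$ commutes with iterated palindromization, i.e. $\theta(\pal(v)) = \pal(\theta(v))$ for every $v \in A^*$. This holds because $\theta$ is a length-preserving bijection on words that respects reversal ($\theta(R(x)) = R(\theta(x))$), so it carries the longest palindromic suffix of a word to the longest palindromic suffix of its image; writing $w = ps$ with $s$ the longest palindromic suffix then gives $\theta(w^{(+)}) = \theta(p)\theta(s)R(\theta(p)) = \theta(w)^{(+)}$, and a one-line induction on the directive word yields the commutation.

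For membership in $P_1$ the argument is immediate. By \cref{P1}, $\pal(bu) \in P_1$ if and only if $bu \in \{a,c\}^* \cdot \{a,b\}^*$. Since $\alp(u) = A$, the letter $c$ occurs in $bu$. But any word of $\{a,c\}^* \cdot \{a,b\}^*$ that begins with $b$ must lie entirely in $\{a,b\}^*$: if the factor from $\{a,c\}^*$ were nonempty its first letter would be in $\{a,c\}$, so that factor is empty and the word is in $\{a,b\}^*$, which contains no $c$. As $bu$ begins with $b$ yet contains a $c$, it is not in $\{a,c\}^* \cdot \{a,b\}^*$, and hence $\pal(bu) \notin P_1$.

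For membership in $P_2$ I would transport this through $\theta$. Since $\theta$ is an involution, \cref{theta} gives $P_2 = \theta(P_1)$, so $\pal(bu) \in P_2$ if and only if $\theta(\pal(bu)) \in P_1$. Using the commutation, $\theta(\pal(bu)) = \pal(\theta(bu))$, and $\theta(bu) = b\,\theta(u)$ still begins with $b$; moreover $\alp(\theta(u)) = \theta(A) = A$, so $\theta(bu)$ also contains a $c$. Repeating the language-membership argument of the previous paragraph shows $\theta(bu) \notin \{a,c\}^* \cdot \{a,b\}^*$, whence $\pal(\theta(bu)) \notin P_1$ by \cref{P1} and therefore $\pal(bu) \notin P_2$. (Equivalently, one may first distil from \cref{theta} and \cref{P1} the characterization $\pal(v) \in P_2 \iff v \in \{a,c\}^* \cdot \{b,c\}^*$, and note that a word beginning with $b$ lying in this set must avoid $a$, which $bu$ does not.)

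The only step needing genuine care is the commutation $\theta \circ \pal = \pal \circ \theta$; once it is in place everything else is a short check on which letters a word starting with $b$ can contain. I would also make explicit the mild subtlety that \cref{P1} is applied to the directive words $bu$ and $\theta(bu)$ themselves, so that no question about non-uniqueness of directive words enters the argument.
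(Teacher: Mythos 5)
Your proposal is internally coherent---the commutation $\theta \circ \pal = \pal \circ \theta$, the identity $P_2 = \theta(P_1)$, and the language-membership checks are all correct---but it is circular relative to the logical structure of this paper. You derive the lemma from \cref{P1} (and, in your variant, from \cref{P2}, which is itself deduced from \cref{theta} and \cref{P1}). However, the direction of \cref{P1} you invoke, namely that $\pal(v) \in P_1$ forces $v \in \{a,c\}^*\cdot\{a,b\}^*$, is precisely what this lemma exists to establish: the paper introduces the lemma with the sentence ``the following lemma means that no other iterated palindrome can be in $P_1$,'' i.e., the automorphism construction from \cite{L2020} gives the ``if'' direction of \cref{P1} and the present lemma is the ingredient for the ``only if'' direction. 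So your argument assumes the very non-membership statement it is supposed to prove, and inserted at this point of the paper it would make the reasoning circular.

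The paper's own proof is self-contained and relies only on results external to this section: the separating-letter property of \cite{DJP2001} (the first letter $b$ of the directive word $bu$ occurs in every length-$2$ factor of $\pal(bu)$, so in particular $ac \notin \fact_2(\pal(bu))$) together with \cref{2-factors}. Since $\alp(u) = A$, both $a$ and $c$ occur in $\pal(bu)$ but can never be adjacent, and comparing the resulting set of length-$2$ factors with the four sets of \cref{2-factors} shows that $\pal(bu)$ cannot even be a factor of a perfectly clustering word; in particular it lies in neither $P_1$ nor $P_2$. Note that this conclusion is strictly stronger than the lemma's statement, and it is what then licenses \cref{P1} and, via $\theta$, \cref{P2}. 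Your $\theta$-symmetry idea is not wasted: it can legitimately halve the work by proving the $P_1$ case with the factor argument above and transporting it to $P_2$ through $P_2 = \theta(P_1)$ and $\theta \circ \pal = \pal \circ \theta$---but the $P_1$ half must be proved from the separating-letter and factor-set facts, not quoted from \cref{P1}.
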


\begin{proof}
    Let $xv\in A^*$ be a word.
    In~\cite{DJP2001}, it is shown that the first letter of a directive word is separating for $\pal(xv)$, i.e., the letter $x$ appears in each factor of length $2$ of $\pal(xv)$. Hence, the letter $b$ appears in each factor of length $2$ of $\pal(xv)$ and $ac \not\in \fact_2(\pal(bu))$. However, $ac$ is a factor in each set given in~\cref{2-factors}. Thus, $\fact_2(\pal(bu))$ cannot be a factor of a perfectly clustering word.
\end{proof}

Using \cref{theta} and \cref{P1}, one may describe the iterated palindromes which are elements of $P_2$.

\begin{proposition}~\label{P2}
    Let $u\in A^*$ be a word. The iterated palindrome $\pal(u)\in P_2$ if and only if $u \in \{a,c\}^*\cdot \{b,c\}^*$.
\end{proposition}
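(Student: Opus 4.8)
The plan is to deduce \cref{P2} from \cref{P1} by transporting everything through the letter-exchange morphism $\theta$, exploiting that $\theta$ is an involution satisfying $\theta(P_2) = P_1$. Since $\theta(a)=c$, $\theta(b)=b$, $\theta(c)=a$ and $\theta^2 = \mathrm{id}$, the identity $P_1 = \theta(P_2)$ of \cref{theta} is equivalent to $P_2 = \theta(P_1)$, so for any word $u$ one has $\pal(u) \in P_2$ if and only if $\theta(\pal(u)) \in P_1$.

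The key step is to establish that iterated palindromization commutes with $\theta$, namely $\theta(\pal(u)) = \pal(\theta(u))$ for every $u \in A^*$. First I would check that $\theta$ commutes with reversal: because $\theta$ is a letter-to-letter morphism, $\theta(R(w)) = R(\theta(w))$, so $\theta$ maps palindromes to palindromes. Next, since $\theta$ is a bijective letter map, it preserves the suffix order and the palindrome property, hence it sends the longest palindromic suffix of $w$ to the longest palindromic suffix of $\theta(w)$; writing $w = ps$ with $s$ this suffix, one gets $(\theta(w))^{(+)} = \theta(p)\theta(s)R(\theta(p)) = \theta(ps R(p)) = \theta(w^{(+)})$, so $\theta$ commutes with palindromic closure. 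A short induction on $|u|$ then yields $\theta(\pal(u)) = \pal(\theta(u))$, the base case being $\pal(\varepsilon) = \varepsilon$ and the inductive step running $\theta(\pal(ux)) = (\theta(\pal(u))\theta(x))^{(+)} = (\pal(\theta(u))\theta(x))^{(+)} = \pal(\theta(ux))$.

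With this commutation in hand, the rest is purely formal. By \cref{P1}, $\pal(\theta(u)) \in P_1$ if and only if $\theta(u) \in \{a,c\}^* \cdot \{a,b\}^*$, so chaining the equivalences gives $\pal(u) \in P_2$ if and only if $u \in \theta(\{a,c\}^* \cdot \{a,b\}^*)$, where I use that $\theta$ is an involution to pass from the condition on $\theta(u)$ to the condition on $u$. Finally I would compute this image language: since $\theta$ is a morphism fixing $b$ and swapping $a$ and $c$, one has $\theta(\{a,c\}^*) = \{a,c\}^*$ and $\theta(\{a,b\}^*) = \{b,c\}^*$, whence $\theta(\{a,c\}^* \cdot \{a,b\}^*) = \{a,c\}^* \cdot \{b,c\}^*$, which is exactly the claimed condition on $u$.

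I expect the commutation identity $\theta \circ \pal = \pal \circ \theta$ to be the only real content; once it is secured, the proposition follows by a mechanical manipulation of \cref{P1} and \cref{theta}. The point to verify with care is that $\theta$ preserves the \emph{longest} palindromic suffix, and not merely some palindromic suffix, since this is precisely what makes palindromic closure commute with $\theta$ rather than being only loosely compatible with it.
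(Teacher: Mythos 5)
Your proof is correct and follows exactly the route the paper indicates for \cref{P2}: deduce it from \cref{theta} and \cref{P1} by transporting through the involution $\theta$. The commutation $\theta \circ \pal = \pal \circ \theta$, which you rightly flag as the only substantive step, is precisely the detail the paper leaves implicit, and your verification of it (via $\theta$ preserving reversal and hence longest palindromic suffixes) is sound.
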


From the previous proposition one may deduce which iterated palindrome ares in $P_1 \cap P_2$.
\begin{proposition}
    Let $u\in A^*$ be a word. The iterated palindrome $\pal(u)\in P_1\cap P_2$ if and only if $u \in \{a,c\}^*\cdot b^*$.
\end{proposition}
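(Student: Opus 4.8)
The plan is to reduce the statement to a purely combinatorial identity between two languages and then to prove that identity directly. First I would observe that, by definition, $\pal(u) \in P_1 \cap P_2$ holds exactly when $\pal(u) \in P_1$ and $\pal(u) \in P_2$ simultaneously. Applying the biconditionals of \cref{P1} and \cref{P2} to the \emph{same} directive word $u$, this is equivalent to $u \in \{a,c\}^*\{a,b\}^*$ and $u \in \{a,c\}^*\{b,c\}^*$ at once. Using the two propositions as biconditionals in $u$ automatically handles the fact that $\pal$ need not be injective, since in each case membership of $\pal(u)$ in $P_1$ or $P_2$ is decided by $u$. Hence the proposition is equivalent to the language identity
\[
    \bigl(\{a,c\}^*\{a,b\}^*\bigr) \cap \bigl(\{a,c\}^*\{b,c\}^*\bigr) = \{a,c\}^* b^*.
\]

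Next I would prove this identity. The inclusion $\supseteq$ is immediate: if $u = w\,b^k$ with $w \in \{a,c\}^*$ and $k \ge 0$, then $b^k \in \{a,b\}^* \cap \{b,c\}^*$, so $u$ admits both required factorizations. For the inclusion $\subseteq$ I would use a characterization of each factor language based on the relative positions of letters rather than on a chosen split point. Writing a factorization $u = u_1 u_2$ with $u_1 \in \{a,c\}^*$ and $u_2 \in \{a,b\}^*$ forces every $c$ of $u$ into $u_1$ and every $b$ into $u_2$; hence membership in $\{a,c\}^*\{a,b\}^*$ is equivalent to the condition that no $b$ occurs to the left of any $c$. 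Symmetrically, $u \in \{a,c\}^*\{b,c\}^*$ is equivalent to: no $b$ occurs to the left of any $a$.

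Finally, combining the two conditions, I would argue that after the first occurrence of $b$ in $u$ there can be neither an $a$ nor a $c$, so every letter following the first $b$ is itself a $b$; and before the first $b$ the word uses only $a$ and $c$. Therefore $u = w\,b^k$ with $w \in \{a,c\}^*$, that is $u \in \{a,c\}^* b^*$, which establishes $\subseteq$ and completes the argument.

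The main obstacle is the position-based characterization of the two factor languages and verifying that their conjunction forces a pure $b$-suffix. This step is elementary but must be handled with care, because the split point in a factorization such as $\{a,c\}^*\{a,b\}^*$ is not unique; phrasing the membership conditions in terms of the order of letter occurrences, rather than a specific split, is precisely what makes the intersection collapse cleanly to $\{a,c\}^* b^*$.
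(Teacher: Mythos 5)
Your proposal is correct and matches the paper's approach: the paper deduces this proposition directly by intersecting the biconditionals of \cref{P1} and \cref{P2}, which is exactly your reduction to the language identity $\bigl(\{a,c\}^*\{a,b\}^*\bigr) \cap \bigl(\{a,c\}^*\{b,c\}^*\bigr) = \{a,c\}^*b^*$. Your explicit verification of that identity (via the ``no $b$ to the left of any $a$ or $c$'' characterizations) simply spells out the step the paper leaves implicit.
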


Following computer exploration, we believe that the conjecture below is valid.
\begin{conjecture}
    A word $w \in P_1 \cap P_2$ if and only if $\pal(u) = w$ and $u \in \{a,c\}^*\cdot b^*$.
\end{conjecture}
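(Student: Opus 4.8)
\emph{Proof strategy.} The implication from right to left is already the content of the preceding proposition: if $w=\pal(u)$ with $u\in\{a,c\}^*b^*$ then $w\in P_1\cap P_2$. For the other direction it is enough to establish the single inclusion
\[
    P_1\cap P_2\ \subseteq\ \{\,\pal(u)\mid u\in A^*\,\},
\]
that is, that every word of $P_1\cap P_2$ is an iterated palindrome. Indeed, given $w\in P_1\cap P_2$ written as $w=\pal(u)$, the preceding proposition applied to $\pal(u)=w\in P_1\cap P_2$ forces $u\in\{a,c\}^*b^*$, which is exactly the form required by the conjecture. So the whole statement reduces to this inclusion.

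First I would record the symmetry provided by \cref{theta}. As $\theta$ is the alphabet exchange fixing $b$, it commutes with reversal and with right palindromic closure, whence
\[
    \theta(\pal(u))=\pal(\theta(u))\qquad\text{for every }u\in A^*,
\]
and $\theta$ stabilises the language $\{a,c\}^*b^*$. Thus the class of iterated palindromes with directive word in $\{a,c\}^*b^*$ is $\theta$-invariant, in agreement with $\theta(P_1\cap P_2)=P_1\cap P_2$ coming from \cref{theta}. Because of this symmetry the two memberships $w\in P_1$ and $w\in P_2$ may be treated on equal footing, and it suffices to show that a palindrome lying in $P_1$ but failing to be an iterated palindrome cannot also lie in $P_2$.

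The engine for this last step is \cref{2-factors}. If $w\in P_1\cap P_2$, then some $a\,w\,b\,s\,c$ and some $a\,t\,b\,w\,c$ are perfectly clustering, so both $\fact_2(a\,w\,b)$ and $\fact_2(b\,w\,c)$ are contained in one of the four sets of \cref{2-factors}. I would combine this double restriction with the peeling construction of \cite{L2020} used in \cref{P1}: writing a witness as $f(U)$ with $U$ shorter and $f\in\{\lambda_a,\lambda_b,\rho_b,\rho_c\}$, I would argue by induction on $|w|$ that the only decompositions surviving both constraints are built from the two inverse-free morphisms $\lambda_a,\rho_c$ (which satisfy $\pal(au)=\lambda_a(\pal(u))\,a$ and $\pal(cu)=c\,\rho_c(\pal(u))$, hence prepend $a$, resp.\ $c$, to the directive word) together with a terminal block of $b$'s; this yields exactly an iterated palindrome with directive word in $\{a,c\}^*b^*$. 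The non-iterated palindromes of $P_1$, produced by a genuine use of $\lambda_b$, are eliminated by the $P_2$-constraint: for the prototype $bacab\in P_1$ one checks that $\fact_2(b\cdot bacab\cdot c)=\{ab,ac,ba,bb,bc,ca\}$ is contained in no set of \cref{2-factors}, so $bacab\notin P_2$, and the same local mechanism should account for the general case.

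The hard part will be making this last sentence precise. The obstruction ``$w$ is not an iterated palindrome'' is a global property of $w$, whereas \cref{2-factors} only sees factors of length two; one must show that every non-iterated palindrome of $P_1$ already carries a length-two obstruction inside $b\,w\,c$, or equivalently that the two witness decompositions can always be chosen compatibly, so that peeling off $f$ returns a shorter word again lying in $P_1\cap P_2$. Turning the global failure of iterated-palindromicity into such a uniform, \cref{2-factors}-detectable obstruction is the essential difficulty, and is the reason the statement is at present only a conjecture.
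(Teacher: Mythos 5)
There is no proof in the paper to compare your argument against: the statement is explicitly labelled a conjecture, supported only by computer exploration, so the only question is whether your proposal closes it --- and it does not, as you yourself concede in your final paragraph. The pieces you do establish are correct and worth recording: the reduction of the conjecture to the single inclusion $P_1\cap P_2\subseteq\{\pal(u)\mid u\in A^*\}$ is valid (once every element of $P_1\cap P_2$ is known to be an iterated palindrome, the unlabelled proposition preceding the conjecture forces its directive word into $\{a,c\}^*b^*$); the $\theta$-symmetry $\theta(\pal(u))=\pal(\theta(u))$ and the identities $\pal(au)=\lambda_a(\pal(u))\,a$ and $\pal(cu)=c\,\rho_c(\pal(u))$ are correct; and your computation that $\fact_2(b\cdot bacab\cdot c)=\{ab,ac,ba,bb,bc,ca\}$ lies in none of the four sets of \cref{2-factors} does show $bacab\notin P_2$.

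The gap is the entire inductive step, and it is twofold. First, the peeling theorem of \cite{L2020} applies to the perfectly clustering witnesses $awbsc$ and $atbwc$, not to $w$ itself; membership of $w$ in $P_1$ and in $P_2$ is certified by two \emph{different} words, and you give no argument that they can be peeled by the same automorphism $f$, nor that peeling either witness sends the distinguished factor $w$ to a shorter palindrome that again lies in $P_1\cap P_2$. Without such compatibility the induction never gets started. Second, your mechanism for eliminating non-iterated palindromes --- that every non-iterated palindrome $w\in P_1$ already violates \cref{2-factors} inside $bwc$ --- is itself a new, unproved claim, verified only for the single word $bacab$. \cref{2-factors} gives a necessary condition for perfect clustering, not a sufficient one, so there is no a priori reason that the global property ``$w$ is not an iterated palindrome'' must leave a trace at the level of length-two factors; a single non-iterated palindrome of $P_1$ whose set $\fact_2(bwc)$ happens to sit inside one of the four admissible sets would defeat the mechanism, and nothing in your proposal rules this out. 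You correctly identify this as the essential difficulty, but identifying it is not overcoming it; the statement remains open.
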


Iterated palindromes represent a small subset of the palindromes in $P_1$ and $P_2$. Those results are a step in the characterization of these sets that the authors intend to pursue. A more general questions is to characterize the palindromes in the special factorization of perfectly clustering Lyndon words on any alphabets.

\bibliographystyle{eptcs}
\bibliography{mybiblio}
\end{document}